\newcommand{\ud}{\mathrm{d}}
\newcommand{\cD}{{\mathcal D}}
\newcommand{\eins}{\mathds{1}}
\numberwithin{equation}{section}
\newtheorem{theorem}{Theorem}[section]
\newtheorem{lemma}[theorem]{Lemma}
\newtheorem{prop}[theorem]{Proposition}
\theoremstyle{definition}
\newtheorem{rem}[theorem]{Remark}
\numberwithin{equation}{section}
\begin{document}

\thispagestyle{empty}

\vspace*{1cm}

\begin{center}

{\LARGE\bf On the spectral gap of one-dimensional Schrödinger operators on large intervals} \\

\vspace*{2cm}

{\large Joachim Kerner \footnote{E-mail address: {\tt Joachim.Kerner@fernuni-hagen.de}} and Matthias Täufer \footnote{E-mail address: {\tt Matthias.Taeufer@fernuni-hagen.de}} }%

\vspace*{5mm}

Department of Mathematics and Computer Science\\
FernUniversit\"{a}t in Hagen\\
58084 Hagen\\
Germany\\

\end{center}

\vfill

\begin{abstract}  We study the effect of non-negative potentials on the spectral gap of one-dimensional Schrödinger operators in the limit of large intervals. In particular, we derive upper and lower bounds on the gap for different classes of potentials which characterize its asymptotic behaviour.
\end{abstract}

\newpage

%%% Introduction
%---------------
\section{Introduction}
The \textit{spectral gap} is a classical quantity in spectral theory of one-dimensional Schr\"odinger operators and is defined as the difference between the lowest and the second lowest eigenvalue. 
In this note, we study its asymptotic behaviour as the length $L > 0$ of the underlying interval tends to infinity. 
There is a long history of articles where the dependence of the gap on a potential on a fixed interval has been studied, such as  \cite{AB,Abramovich,Lavine,ACH} and, with different methods, \cite{KirschGapII,KirschGap}. 
It turns out that the effect of a potential on the spectral gap strongly depends on its shape: If the potential is convex or a symmetric single-well potential, the spectral gap increases in comparison to the spectral gap of the free Dirichlet Laplacian \cite{AB,Lavine}. Contrarily, it was shown in \cite{Abramovich} that adding a twice symmetric double-well potential to the free Dirichlet Laplacian will decrease the spectral gap.
Therefore, it seems in general very hard to control the spectral gap on a fixed interval for a generic potential.
In this note, we study instead the asymptotic behaviour of the gap for a fixed potential as the length of the underlying interval tends to infinity.
It is one aim of this note to show that in this scenario the asymptotic behaviour of the spectral gap can nevertheless be controlled for a large class of potentials without symmetry or related assumptions. 
As a matter of fact, we will restrict ourselves to bounded and non-negative potentials on $\mathbb{R}$ which decay sufficiently fast at infinity. In particular, our results cover arbitrary compactly supported bounded potentials. 
We show that the gap decays at least as fast as the one of the free Dirichlet Laplacian, that is proportional to $L^{-2}$ as $L$ approaches infinity, which is shown to be sharp. However, we also prove that in many cases the gap closes faster.
Indeed, Theorem~\ref{ThmLowerBoundI} formulates a somewhat striking result that (arbitrarily small and locally supported or fast decaying) potentials will completely change the dynamics of the spectral gap on large intervals from $L^{-2}$ to a strictly faster decay rate.
We conjecture $L^{-3}$ to be the universal decay rate in this class.

Another motivation to study the asymptotic behaviour of the spectral gap grew out of the investigations in \cite{KPSBEC}. In this paper, the authors proved Bose--Einstein condensation of a certain type in external random potentials under the assumption that the underlying one-particle operator (which simply is a $d$-dimensional Schrödinger operator) satisfies a certain \emph{spectral gap condition}. This gap condition is directly related to a lower bound on the spectral gap of this one-particle operator in the limit of large intervals. More explicitly and translated to our deterministic setting, the authors ask to establish a lower bound on the spectral gap which is of order $L^{-1+\eta}$ where $0 < \eta < 1$. In other words, they require the spectral gap not to close too fast as $L$ goes to infinity! While we show in this note that, for potentials of short range, the gap closes strictly faster than required for this gap condition, we nevertheless see it as a first contribution to a better understanding of the asymptotics of the spectral gap of Schrödinger operators.
%of, e.g., the gap condition for Schrödinger operators as formulated in  \cite{KPSBEC}.

%%%
\section{Model and results}
We study one-dimensional Schrödinger operators of the form
\begin{equation*}
h=-\frac{\ud^2}{\ud x^2}+v(x)
\end{equation*}
with a bounded non-negative potential $v:\mathbb{R} \rightarrow \mathbb{R}_{\geq 0}$. The restriction of $h$ to the finite interval $(-L/2,+L/2)$ is denoted by $h^D_L$ where $D$ refers to Dirichlet boundary conditions imposed at $x=\pm L/2$. It is well-known that $h^D_L$ has purely discrete spectrum and we denote its eigenvalues with multiplicities as $\varepsilon_0(L) < \varepsilon_1(L) \leq ...$. An associated orthonormal basis of eigenfunctions shall be denoted by $(\varphi^L_n)_{n \in \mathbb{N}_0}$.

The object of interest in this paper is the \textit{spectral gap} defined by 
\begin{equation*}
\Gamma_v(L):=\varepsilon_1(L)-\varepsilon_0(L)\ .
\end{equation*}
Note that, since $v\in L^{\infty}((-L/2,+L/2))$, the ground state $\varphi^L_0$ is unique \cite{lieb2001analysis} and hence $\Gamma_v(L) > 0$ for any fixed $L > 0$. To warm up, we start with an upper bound for potentials of sufficiently short range.
\begin{theorem}[Upper bound I]\label{UpperBoundI} Let $v \in L^{\infty}(\mathbb{R})$ be a non-negative potential such that
	\begin{equation*}\label{ShortRange}
	|v(x)|\leq \frac{C}{|x|^2}\ , \quad \text{for a.e.}\quad x\in \mathbb{R}\ ,
	\end{equation*}
	for some constant $C > 0$. Then
	\begin{equation*}
	\Gamma_v(L) \leq \frac{\beta}{L^2}
	\end{equation*}
	for some constant $\beta$ and all $L$ large enough.
\end{theorem}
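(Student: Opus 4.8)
The plan is to bound the gap by the first excited eigenvalue alone and then estimate the latter from above using a two-dimensional trial space placed where the potential is small. The starting observation is that, since $v \geq 0$, the quadratic form
\begin{equation*}
q_L(u) = \int_{-L/2}^{L/2} |u'(x)|^2 \, \ud x + \int_{-L/2}^{L/2} v(x)\,|u(x)|^2 \, \ud x
\end{equation*}
associated with $h^D_L$ is non-negative on $H^1_0((-L/2,L/2))$. Hence $\varepsilon_0(L) \geq 0$ and therefore
\begin{equation*}
\Gamma_v(L) = \varepsilon_1(L) - \varepsilon_0(L) \leq \varepsilon_1(L).
\end{equation*}
It thus suffices to produce an upper bound of order $L^{-2}$ for $\varepsilon_1(L)$ alone; discarding $\varepsilon_0(L) \geq 0$ costs only a constant and preserves the correct order.

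For the bound on $\varepsilon_1(L)$ I would invoke the min-max principle, according to which $\varepsilon_1(L) \leq \max_{u \in V\setminus\{0\}} q_L(u)/\|u\|^2$ for \emph{any} two-dimensional subspace $V \subset H^1_0((-L/2,L/2))$. The idea is to choose $V$ inside the far region $I := (L/4, L/2)$, where the hypothesis gives the pointwise estimate $v(x) \leq C/(L/4)^2 = 16C/L^2$ for a.e.\ $x \in I$. Concretely, take for $V$ the span of the first two Dirichlet eigenfunctions of $-\ud^2/\ud x^2$ on $I$, that is $\varphi_k(x) = \sin\!\big(k\pi(x - L/4)/(L/4)\big)$ for $k = 1,2$, extended by zero to the whole interval. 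These functions lie in $H^1_0((-L/2,L/2))$ since they vanish at the endpoints of $I$, they are mutually orthogonal, and they span a genuinely two-dimensional space.

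On $V$ the kinetic energy is controlled by the largest Dirichlet eigenvalue used, $(2\pi/(L/4))^2 = 64\pi^2/L^2$, since the $\varphi_k$ are orthogonal in both $L^2(I)$ and the Dirichlet form; the potential energy is bounded by $\|v\|_{L^\infty(I)} \leq 16C/L^2$. As both estimates hold uniformly for every $u \in V$, one obtains $q_L(u) \leq (64\pi^2 + 16C)\,\|u\|^2/L^2$, and hence $\varepsilon_1(L) \leq (64\pi^2 + 16C)/L^2$. Combined with the first step this yields the claim with $\beta = 64\pi^2 + 16C$, valid in fact for all $L > 0$. There is no genuine obstacle here: the only points that require a word of care are the admissibility of the zero-extended trial functions in the form domain and the uniformity of the kinetic bound over the entire two-dimensional space, both of which are routine. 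The conceptual content is simply the recognition that, for a potential decaying like $C/|x|^2$, a cheap two-dimensional subspace can always be hidden in the tail region where $v$ is already of size $O(L^{-2})$.
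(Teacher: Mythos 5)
Your proposal is correct and follows essentially the same route as the paper: both discard $\varepsilon_0(L)\geq 0$, bound $\varepsilon_1(L)$ via the min--max principle using the subinterval $(L/4,L/2)$ where $v \leq 16C/L^2$, and arrive at the bound $(64\pi^2+16C)/L^2$. The paper phrases this as Dirichlet bracketing (comparison with the restricted operator $\tilde h^D_L$ on $(L/4,L/2)$) rather than as an explicit two-dimensional trial space, but these are the same argument.
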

\begin{proof} Obviously, one has $\Gamma_v(L) \leq \varepsilon_1(L)$, and it remains to find an upper bound on $ \varepsilon_1(L)$. 
For this purpose, consider the restriction of $h$ to the interval $(L/4,L/2)$ with Dirichlet boundary conditions at the end points of the interval. We denote this operator by $\tilde{h}^{D}_L$ and its second lowest eigenvalue by $\tilde{\varepsilon}_1(L)$. 
Due to the min-max principle, one has $\varepsilon_1(L) \leq \tilde{\varepsilon}_1(L)$.
	Now, since $\|v\|_{L^{\infty}(L/4,L/2)} \leq 16C/L^2$, we obtain
	\begin{equation*}
	\tilde{\varepsilon}_1(L) \leq \frac{64\pi^2}{L^2}+\frac{16C}{L^2}\ 
	\end{equation*}
	where we used that the second Dirichlet eigenvalue on $(L/4,L/2)$ is $\frac{64\pi^2}{L^2}$.
\end{proof}
\begin{rem} Clearly, the proof of Theorem~\ref{UpperBoundI} would also work for potentials that decay like $|x|^{-2}$ on either the negative or the positive half-axis only. 
In fact, in this case, one can replace uniform boundedness of $v$ by local boundedness, which includes -- for instance --  the potential $v(x) = \exp (x)$.
\end{rem}
By adapting the methods developed in \cite{Lavine,ACH} we can also establish the following result which yields a similar upper bound as in Theorem~\ref{UpperBoundI} for potentials of arbitrarily slow decay. 
However, it requires a symmetry and monotonicity condition.
\begin{theorem}[Upper bound II]\label{UpperBoundII} Let $v \in L^{\infty}(\mathbb{R})$ be non-negative and such that $v(-x)=v(x)$. In addition, $v$ shall be monotonically increasing on $(-\infty,0)$ and thus monotonically decreasing on $(0,\infty)$. Then,
	\begin{equation*}
	\Gamma_v(L) \leq \frac{3\pi^2}{L^2}
	\end{equation*}
	for all $L > 0$.
\end{theorem}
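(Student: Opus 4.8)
The plan is to prove the bound by a coupling-constant deformation, comparing $\Gamma_v(L)$ with the free gap $3\pi^2/L^2$, which is exactly the value attained at $v \equiv 0$ because the Dirichlet eigenvalues of $-\ud^2/\ud x^2$ on $(-L/2,L/2)$ are $(n+1)^2\pi^2/L^2$. First I would record the structural consequences of the hypotheses. Since $v$ is even and the interval symmetric, $h^D_L$ commutes with the parity $x \mapsto -x$, so eigenfunctions have definite parity; by Sturm oscillation the ground state $\varphi^L_0$ has no nodes and is even, while $\varphi^L_1$ has exactly one node, necessarily at $0$, and is odd. Moreover all eigenvalues of this regular Sturm--Liouville problem are simple, so along the family $h_t = -\ud^2/\ud x^2 + t v$ with $t \in [0,1]$ (where $tv$ is again even, non-negative and non-increasing on $(0,\infty)$) the maps $t \mapsto \varepsilon_0(t),\varepsilon_1(t)$ and the normalized eigenfunctions depend real-analytically on $t$.

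By the Hellmann--Feynman formula one has $\tfrac{\ud}{\ud t}\varepsilon_n(t) = \int_{-L/2}^{L/2} v\,(\varphi^{(t)}_n)^2$, hence
\[
\frac{\ud}{\ud t}\Gamma_{tv}(L) = \int_{-L/2}^{L/2} v\,\big((\varphi^{(t)}_1)^2 - (\varphi^{(t)}_0)^2\big).
\]
It therefore suffices to show that this derivative is non-positive for every $t$, for then $\Gamma_v(L) = \Gamma_{1\cdot v}(L) \le \Gamma_{0\cdot v}(L) = 3\pi^2/L^2$. Multiplying by $t$, the whole matter reduces to the following key claim, applied to $w = tv$: \emph{for an even potential $w \ge 0$ that is non-increasing on $(0,L/2)$, with even ground state $\varphi_0$ and odd first excited state $\varphi_1$ of $-\ud^2/\ud x^2 + w$ on $(-L/2,L/2)$, one has $\int_{-L/2}^{L/2} w\,(\varphi_0^2 - \varphi_1^2) \ge 0$.}

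To prove the claim I would study the ratio $r = \varphi_1/\varphi_0$ on the half-interval $(0,L/2)$, where both $\varphi_0>0$ and $\varphi_1>0$. The clean tool is the Wronskian $W = \varphi_1'\varphi_0 - \varphi_1\varphi_0'$, which satisfies $W' = (\varepsilon_0 - \varepsilon_1)\varphi_0\varphi_1 < 0$ on $(0,L/2)$ by the eigenequations, since $\varepsilon_0 < \varepsilon_1$ and $\varphi_0,\varphi_1 > 0$ there. As $W(L/2) = 0$ (both functions vanish at the Dirichlet endpoint), strict monotonicity forces $W > 0$ on $(0,L/2)$, so $r' = W/\varphi_0^2 > 0$ and $r$ increases strictly from $r(0)=0$. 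Consequently $\varphi_0^2 - \varphi_1^2 = \varphi_0^2(1 - r^2)$ is positive on an initial interval $(0,a)$ and negative on $(a,L/2)$, where $r(a)=1$; it cannot be one-signed because both squares integrate to $1/2$ on $(0,L/2)$, so the even density $\varphi_0^2-\varphi_1^2$ changes sign exactly once, from $+$ to $-$. Against a non-increasing weight such a balanced density integrates non-negatively: using $\int_0^{L/2}(\varphi_0^2-\varphi_1^2)=0$ and subtracting the constant $w(a)$ gives $\int_0^{L/2} w\,(\varphi_0^2-\varphi_1^2) = \int_0^{L/2}(w-w(a))(\varphi_0^2-\varphi_1^2) \ge 0$, since both factors share a sign on each side of $a$; evenness then yields the full-interval inequality.

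The part needing most care is this key claim, specifically the monotonicity of $r$ and the resulting single sign change; the Wronskian identity settles the monotonicity cleanly once one knows from the parity and oscillation analysis that $\varphi_0,\varphi_1>0$ on $(0,L/2)$ and $\varphi_1(0)=0$, after which the decreasing-weight integration is elementary. I should also secure the analytic-perturbation input, namely simplicity of $\varepsilon_0(t)$ and $\varepsilon_1(t)$ for all $t$ (automatic for one-dimensional separated-boundary problems), so that the Hellmann--Feynman differentiation is justified; should differentiability prove delicate, one can instead argue the monotonicity of $t \mapsto \Gamma_{tv}(L)$ directly from the same sign inequality, but the infinitesimal version above is the conceptual heart of the argument.
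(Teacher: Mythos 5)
Your proposal is correct and follows essentially the same route as the paper: the coupling-constant deformation $v_t = tv$, the Hellmann--Feynman formula for $\frac{\ud}{\ud t}\Gamma_{v_t}(L)$, the observation that $\varphi_1^2-\varphi_0^2$ is even, has mean zero, and changes sign exactly once on each half-interval, and the trick of subtracting the constant $v(x_0)$ (your $w(a)$) to exploit monotonicity of the potential. The only difference is that where the paper cites \cite{ACH} (Lemma~2.4) for the single sign change of $|\varphi_1|^2-|\varphi_0|^2$, you supply a short self-contained Wronskian argument for it, which is a welcome addition but not a different method.
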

\begin{proof} One introduces the function
	\begin{equation*}
	\psi^L(x):=|\varphi^L_1(x)|^2-|\varphi^L_0(x)|^2
	\end{equation*}
	and observes that $\psi^L$ is continuous, symmetric and has mean value zero. Furthermore, by Sturm's oscillation theorem, $\varphi^L_1$ has exactly one zero and due to symmetry, we conclude that $\varphi^L_1(0)=0$. Now, as in [\cite{ACH}, Lemma~2.4], one shows that $\psi^L$ has at least one but at most two zeroes in $(-L/2,+L/2)$. Moreover, there exists an (possible empty) interval $(-x_0,+x_0) \subset (-L/2,+L/2)$ such that $\psi^L(x) \leq 0$ for $x \in (-x_0,+x_0)$ and that $\psi^L(x) \geq 0$ for $x \in (-L/2,+L/2) \setminus (-x_0,+x_0)$.
	
	One then employs the Hellmann-Feynman formula (see, e.g., \cite{ACH}) which gives an expression for the ``time''-derivative of the gap $\Gamma_v(L)$ given we set $v_t(x):=t\cdot v(x)$. More explicitly, one has
	\begin{equation}\label{FeynHell}
	\frac{\ud}{\ud t}\Gamma_{v_t}(L)=\int_{-L/2}^{+L/2}v(x)\psi^L(x)\ \ud x\ .
	\end{equation}
	Now the key observation is that, since $\psi^L$ has mean value zero, one can replace $v(x)$ in \eqref{FeynHell} by $v(x)-c$ where $c \in \mathbb{R}$ is some constant. More explicitly, we set $c:=v(x_0)$. This yields
	\begin{equation}\label{HF}\begin{split}
	\frac{\ud}{\ud t}\Gamma_{v_t}(L)&=\int_{-L/2}^{+L/2}(v(x)-v(x_0))\psi^L(x)\ \ud x \\
	&=\int_{(-L/2,+L/2) \setminus (-x_0,+x_0)}(v(x)-v(x_0))\psi^L(x)\ \ud x \\ & \quad +\int_{(-x_0,+x_0)}(v(x)-v(x_0))\psi^L(x)\ \ud x \\ 
	&\leq 0\ .
	\end{split}
	\end{equation}
	Integrating this inequality then yields the statement.
	\end{proof}
In the next step we investigate how fast the gap can actually close and establish lower bounds on the spectral gap. For this purpose, the following unitary transformation will come handy: We introduce an operator
\begin{equation*}
g_L=-\frac{\ud^2}{\ud x^2}+w_L(x)
\end{equation*}
on the Hilbert space $L^2((-1/2,+1/2))$ subject to Dirichlet boundary conditions where the (effective) potential $w_L$ is defined via $w_L(x):=L^2v(Lx)$. 
\begin{prop}[Unitary transformation]\label{UnitaryTrafo} The operator $h^D_L$ defined on $L^2((-L/2,+L/2))$ is unitarily equivalent to the operator $L^{-2}g_L$ defined on $L^2((-1/2,+1/2))$.
\end{prop}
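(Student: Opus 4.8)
The plan is to produce the unitary explicitly as a dilation (scaling) map between the two intervals, and then to verify by a direct computation that it intertwines the two operators. First I would define
\[
U : L^2((-L/2,+L/2)) \to L^2((-1/2,+1/2)), \qquad (Uf)(x) := \sqrt{L}\,f(Lx),
\]
and check that it is unitary: the substitution $y=Lx$ gives $\|Uf\|^2 = \int_{-1/2}^{+1/2} L\,|f(Lx)|^2\,\ud x = \int_{-L/2}^{+L/2}|f(y)|^2\,\ud y = \|f\|^2$, so $U$ is an isometry, and it is invertible with $(U^{-1}g)(y) = L^{-1/2}\,g(y/L)$, hence unitary.

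Next I would establish the intertwining relation $U\,h^D_L\,U^{-1} = L^{-2} g_L$. Writing $f = U^{-1}g$, so that $f(y)=L^{-1/2}g(y/L)$, the chain rule yields $f''(y) = L^{-5/2}\,g''(y/L)$. Applying $h^D_L$ and then $U$, and substituting $y=Lx$ so that $y/L=x$, the powers of $L$ combine to give
\[
(U\,h^D_L\,U^{-1}g)(x) = -\frac{1}{L^2}\,g''(x) + v(Lx)\,g(x),
\]
while on the other hand $(L^{-2}g_L\,g)(x) = -L^{-2}g''(x) + L^{-2}w_L(x)\,g(x) = -L^{-2}g''(x) + v(Lx)\,g(x)$ by the definition $w_L(x)=L^2 v(Lx)$. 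The two expressions agree. The conceptual point worth highlighting here is that the second derivative scales with a factor $L^{-2}$, while the dilated potential $v(L\,\cdot\,)$ must be multiplied by the compensating factor $L^2$ in order to live on the same footing as the kinetic term; this is precisely why the effective potential is defined as $w_L(x)=L^2 v(Lx)$.

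Finally I would confirm that the construction respects the operator domains, so that the equivalence is an identity of self-adjoint operators rather than merely a formal relation. Since $U$ maps $H^2$-functions vanishing at $\pm L/2$ bijectively onto $H^2$-functions vanishing at $\pm 1/2$, the Dirichlet domain of $h^D_L$ is carried exactly onto the Dirichlet domain of $g_L$. I do not expect a genuine obstacle in this argument: the statement is a standard rescaling, and the only point requiring care is the consistent bookkeeping of the $\sqrt{L}$ normalisation and of the $L^{-2}$ and $L^{2}$ powers, so that both unitarity and the correct effective potential emerge simultaneously.
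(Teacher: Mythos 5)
Your proposal is correct and uses exactly the same unitary map $U\varphi(x)=\sqrt{L}\,\varphi(Lx)$ as the paper, which simply states that the proof is a straightforward calculation with this map; you have carried out that calculation (unitarity, the intertwining relation, and the domain correspondence) explicitly and correctly.
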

\begin{proof} The proof is a straightforward calculation with the unitary map 
	\begin{equation*}
	U: L^2((-L/2,+L/2)) \rightarrow L^2((-1/2,+1/2))\ , \quad (U\varphi)(x):=\sqrt{L}\varphi(Lx)\ .
	\qedhere
	\end{equation*}
\end{proof}
Again, since $w_L \in L^{\infty}((-1/2,+1/2))$, the operator $g_L$ has purely discrete spectrum. We denote its eigenvalues by $\lambda_0(L) < \lambda_1(L) \leq ...$ and its associated spectral gap by 
\begin{equation}\label{GapTH}
\widetilde{\Gamma}_{w_L}(L):=\lambda_1(L)-\lambda_0(L)\ .
\end{equation}
We then obtain the following statement which shows that the spectral gap of $h^D_L$ closes strictly faster than $L^{-2}$, in contrast to the free Dirichlet Laplacian. 
\begin{theorem}[Lower bound]\label{ThmLowerBoundI} Assume that $v \in L^{\infty}(\mathbb{R})$ is non-negative and not the zero potential. In addition, we assume it decays such that
	\begin{equation*}
	|v(x)| \leq \frac{C}{|x|^{\alpha}}\ , \quad \text{for a.e.}\quad x\in \mathbb{R}\ ,
	\end{equation*}
	for some constants $C > 0$ and $\alpha > 2$. Then 
	\begin{equation}\label{EqXXX}
 \lim_{L \rightarrow \infty}L^2\Gamma_v(L)=0\ .
	\end{equation}
\end{theorem}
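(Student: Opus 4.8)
The plan is to use Proposition~\ref{UnitaryTrafo} to reduce the claim to a statement about the rescaled operator $g_L$. Since $h^D_L$ is unitarily equivalent to $L^{-2}g_L$, the eigenvalues satisfy $\varepsilon_n(L)=L^{-2}\lambda_n(L)$, so that $L^2\Gamma_v(L)=\widetilde{\Gamma}_{w_L}(L)=\lambda_1(L)-\lambda_0(L)$, and it suffices to prove $\lambda_1(L)-\lambda_0(L)\to0$. The guiding picture is that $w_L(x)=L^2v(Lx)$ concentrates at the origin: for any $r_L\to0$ with $Lr_L\to\infty$ one has $\int_{-r_L}^{r_L}w_L\,\ud x=L\int_{-Lr_L}^{Lr_L}v(y)\,\ud y\to\infty$, where $v\not\equiv0$ and $v\in L^1(\rz)$ enter (the latter because $\alpha>2$). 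Thus $w_L$ acts as an ever-growing barrier at $0$ which should split $(-1/2,1/2)$ into two Dirichlet intervals of length $1/2$, each with ground-state energy $4\pi^2$. Accordingly, I will show that both $\lambda_0(L)$ and $\lambda_1(L)$ converge to $4\pi^2$.

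For the upper bound on $\lambda_1$ I would apply the min-max principle to the two-dimensional trial space spanned by $\phi_+(x)=2\sin(2\pi x)\eins_{(0,1/2)}$ and $\phi_-(x)=-2\sin(2\pi x)\eins_{(-1/2,0)}$, the extended Dirichlet ground states of the two half-intervals. These are orthonormal and, having disjoint supports, contribute no off-diagonal term to the quadratic form, so $\lambda_1\le\max_{\pm}\langle g_L\phi_\pm,\phi_\pm\rangle$. Since $\langle g_L\phi_\pm,\phi_\pm\rangle=4\pi^2+\int w_L|\phi_\pm|^2$, everything hinges on the potential term, which I control via $\sin^2(2\pi x)\le4\pi^2x^2$: then $\int_0^{1/2}w_L|\phi_+|^2\,\ud x\le\frac{16\pi^2}{L}\int_0^{L/2}v(y)\,y^2\,\ud y$, and the decay $v(y)\le C|y|^{-\alpha}$ shows this tends to $0$ for every $\alpha>2$, the borderline cases $2<\alpha\le3$ being handled by $\frac1L\int_1^{L/2}y^{2-\alpha}\,\ud y\to0$. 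Hence $\lambda_1\le4\pi^2+o(1)$, and in particular $\lambda_0\le\lambda_1$ is bounded.

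For the lower bound $\lambda_0\ge4\pi^2-o(1)$ I would proceed in two steps. Let $\psi$ be the (real) normalized ground state of $g_L$; from $\lambda_0\le4\pi^2+o(1)$ both $\int|\psi'|^2$ and $\int w_L|\psi|^2$ are bounded by some fixed $M$. First I show $\psi(0)\to0$: the estimate $|\psi(x)-\psi(0)|\le\sqrt{|x|}\,\|\psi'\|_{L^2}\le\sqrt{|x|M}$ gives $|\psi|\ge|\psi(0)|-\sqrt{r_LM}$ on $(-r_L,r_L)$, whence $M\ge\int_{-r_L}^{r_L}w_L|\psi|^2\ge(|\psi(0)|-\sqrt{r_LM})^2\int_{-r_L}^{r_L}w_L$; choosing $r_L=L^{-1/2}$ and using that the last integral diverges forces $\psi(0)=:\delta_L\to0$ (in either case). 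Second, on each half-interval I write $\psi=\delta_L\ell+\phi$ with $\ell(x)=1\mp2x$ the linear function carrying the boundary values and $\phi\in H_0^1$; orthogonality of $\ell'$ and $\phi'$ yields $\int|\psi'|^2=\int|\phi'|^2+2\delta_L^2\ge4\pi^2\int|\phi|^2$, while $\int|\phi|^2\ge\int|\psi|^2-2\delta_L/\sqrt6$. Summing over the two halves and using $\|\psi\|_{L^2}=1$ gives $\lambda_0\ge\int|\psi'|^2\ge4\pi^2-c\,\delta_L$ for an explicit constant $c$, hence $\lambda_0\ge4\pi^2-o(1)$.

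Combining the two bounds gives $4\pi^2-o(1)\le\lambda_0\le\lambda_1\le4\pi^2+o(1)$, so $\widetilde{\Gamma}_{w_L}(L)\to0$ and therefore $L^2\Gamma_v(L)\to0$. The main obstacle is the lower bound, and within it the assertion $\psi(0)\to0$: one must genuinely exploit the concentration of the mass of $w_L$ (rather than merely $w_L\ge0$, which gives only $\lambda_0\ge\pi^2$) to conclude that the ground state is effectively pinned to zero at the origin. Once this is established, the reduction to two decoupled Dirichlet problems of length $1/2$ is routine.
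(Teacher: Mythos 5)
Your proposal is correct, and for the harder half (the lower bound on $\lambda_0$) it takes a genuinely different route from the paper's main argument. The paper proves $\liminf\lambda_0(L)\ge 4\pi^2$ by a compactness argument: it extracts (via uniform continuity, weak $H^1$-compactness and a diagonal argument over intervals $(\varepsilon_n,1/2)$) a subsequence of ground states converging in $H^1$ to a limit $\varphi_\infty$ with $\varphi_\infty(0)=0$, and then applies the optimal Poincar\'e inequality to the limit. You instead argue quantitatively: you first show $\psi(0)=\delta_L\to0$ by playing the uniform H\"older bound $|\psi(x)-\psi(0)|\le\sqrt{|x|}\,\|\psi'\|_{L^2}$ against the divergence of $\int_{-r_L}^{r_L}w_L = L\int_{-Lr_L}^{Lr_L}v$, and then correct the boundary value by the explicit linear function $\ell$, whose derivative is $L^2$-orthogonal to that of the $H^1_0$-remainder, to get $\lambda_0\ge 4\pi^2-c\,\delta_L$ directly. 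This is very close in spirit to the paper's own appendix (Lemma~\ref{lem:appendix}), which likewise avoids compactness by perturbing the ground state to create a zero near the origin and invoking Poincar\'e on the two resulting subintervals; your version is slightly more explicit about the corrector. What your route gives up is the by-product the authors explicitly want from the longer argument, namely $H^1$-convergence of the ground states along a subsequence, which they reuse in the proof of Theorem~\ref{TheoremSP}. On the upper bound your treatment is also cleaner in one respect: the paper's trial functions live on $(-1/2,-\varepsilon)$ and $(\varepsilon,1/2)$ and use that $w_L$ is eventually supported in $[-\varepsilon,\varepsilon]$, which is literally true only for compactly supported $v$ (the generalization is only announced), whereas your bound $\sin^2(2\pi x)\le 4\pi^2x^2$ combined with $\frac1L\int_0^{L/2}v(y)y^2\,\ud y\to0$ handles every $\alpha>2$ in one stroke and even identifies the limit $\lambda_1\to 4\pi^2$ exactly rather than only $\limsup\lambda_1\le\pi^2/(1/2-\varepsilon)^2$. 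All the individual estimates you state check out (orthonormality and disjoint supports of $\phi_\pm$, the substitution giving $\frac{16\pi^2}{L}\int_0^{L/2}v(y)y^2\,\ud y$, the dichotomy forcing $\delta_L\to0$, and $\|\ell'\|^2_{L^2(0,1/2)}=2$, $\|\ell\|_{L^2(0,1/2)}=1/\sqrt6$), so the argument is complete.
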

\begin{rem} Theorem~\ref{ThmLowerBoundI} shows something striking: We know that the gap closes like $L^{-2}$ for the free Dirichlet Laplacian. However, as soon as we add a non-zero and non-negative potential of compact support and arbitrarily small $L^{\infty}$-norm, the gap closes faster than in the free case! In this sense, although one eventually ends up on a very large interval, one nevertheless ``feels'' the potential. Consequently, going from a zero or constant potential to a non-constant potential of compact support can be regarded as a sharp ``phase transition''.
	
	%	Although this seems strange at first, one has to recall that, since the ground state energy goes to zero as $L$ tends to infinity, the potential appears to a particle occupying the ground state as a very high barrier and this effectively leads to a decoupling of the left and right half-axis.
\end{rem}

\begin{proof} We first prove the statement for compactly supported potentials and outline the generalization at the end of the proof. 
	
	By Prop.~\ref{UnitaryTrafo} it is enough to prove that $$\lim_{L \rightarrow \infty}\widetilde{\Gamma}_{w_L}(L)=0 \ .$$ 
	
In a first step we show that 
\begin{equation}\label{ToPrive}
\limsup_{L \rightarrow \infty}\lambda_1(L)\leq \frac{\pi^2}{(1/2)^2}\ .
\end{equation}
By the min-max principle we have
\begin{equation*}
\lambda_1(L) \leq \sup_{\phi \in \text{span}\{\psi_1,\psi_2\}, \|\phi\|_{L^2((-1/2,+1/2))}=1}\langle \phi, g_L \phi \rangle_{L^2((-1/2,+1/2))}\ 
\end{equation*}
%gro
for any $\psi_1, \psi_2 \in H^1_0((-1/2,+1/2))$, the form domain corresponding to $g_L$, where the expression $\langle \phi, g_L \phi \rangle_{L^2((-1/2,+1/2))}$ is understood as the corresponding quadratic form. We take $\psi_1$ to be the (normalized) ground state of the Dirichlet Laplacian on the interval $(-1/2,-\varepsilon)$ and $\psi_2$ to be the (normalized) ground state of the Dirichlet Laplacian on the interval $(\varepsilon,1/2)$, both extended by zero to all of $(-1/2,+1/2)$. For $L > 0$ large enough, $w_L$ has support only in the interval $[-\varepsilon,+\varepsilon]$ and we readily obtain
\begin{equation}\label{EstXX}
\lambda_1(L) \leq \frac{\pi^2}{(1/2-\varepsilon)^2}
\end{equation}
for $L$ large enough. Since $\varepsilon$ was arbitrary, this proves \eqref{ToPrive}.

Next, we show
\begin{equation}\label{ToPriveI}
  \liminf_{L \rightarrow \infty}\lambda_0(L)\geq \frac{\pi^2}{(1/2)^2}\ .
\end{equation}
There is a relatively short proof of~\eqref{ToPriveI}, see Lemma~\ref{lem:appendix} in the appendix.
In contrast to it, the argument we are going to use here is longer but, as a by-product, it yields information on the convergence of the ground states (at least along a subsequence), which will then be crucial in the proof of the subsequent Theorem~\ref{TheoremSP}. 

First observe that $\lambda_0(L)\leq C$ for all $L$ and some constant $C > 0$. 
This immediately follows from the variational principle. 

Now, to prove \eqref{ToPriveI}, we assume for contradiction that there exists a sequence of lengths $(L_k)$, tending to $\infty$ with $\lambda_0(L_k) \rightarrow c < \frac{\pi^2}{(1/2)^2}$ as $k \rightarrow \infty$. Let $(\varphi_k)$ denote the corresponding sequence of normalized ground states of the operator $g_{L_k}$. We claim that there exists a subsequence of $(\varphi_k)$ that converges in $H^1$ to a limit $\varphi_{\infty} \in H^1((-1/2,+1/2))$ which is such that $\varphi_{\infty}(0)=0$. Assuming this for the moment, we conclude, for an arbitrary $\varepsilon > 0$ and $j$ large enough,
\begin{equation*}\begin{split}
\lambda_0(L_{k_j})=\langle \varphi_{k_j},g_{L_{k_j}}\varphi_{k_j}\rangle_{L^2((-1/2,+1/2))} &\geq \|\varphi^{\prime}_{k_j}\|^2_{L^2((-1/2,+1/2))}\\
&\geq \|\varphi^{\prime}_{\infty}\|^2_{L^2((-1/2,+1/2))}-\varepsilon \\
&\geq \frac{\pi^2}{(1/2)^2}-\varepsilon\ ,
\end{split}
\end{equation*}
where we employed the Poincar\'e inequality with optimal constant in the last step, taking $\varphi_{\infty}(0)=0$ into account. However, since $\varepsilon > 0$ was arbitrary, we end up with a contradiction.

It remains to prove existence of $\varphi_{\infty}$ with the desired properties. To do this, we first note that for $x \in (-1/2,+1/2)$ and $|h| > 0$ small enough,
\begin{equation}\label{EstimateUniform}\begin{split}
|\varphi_k(x+h)-\varphi_k(x)|&=\left|\int_{x}^{x+h}\varphi^{\prime}_k(x)\ \ud x \right| \\
&\leq \sqrt{|h|} \cdot \|\varphi^{\prime}_k\|_{L^2((-1/2,+1/2))} \\
&\leq \sqrt{|h|} \cdot \tilde{c}
\end{split}
\end{equation}
for some constant $\tilde{c} > 0$ independent of $k$. Hence, from \eqref{EstimateUniform} we conclude that $(\varphi_k)$ are uniformly continuous. Also, since $(\varphi_k)$ forms a bounded sequence in $H^1((-1/2,+1/2))$ (note that $\|\varphi_k\|_{L^2((-1/2,+1/2))}=1$), there exists a subsequence, which we also shall denote by $(\varphi_k)$, that converges weakly in $H^1((-1/2,+1/2))$ and in norm in $L^2((-1/2,+1/2))$ to a limit function $\varphi_{\infty}$. Furthermore, on an interval $(\varepsilon,+1/2)$ (or similarly on $(-1/2,\varepsilon)$), the eigenvalue equation implies
\begin{equation*}
\int_{\varepsilon}^{+1/2}|\varphi^{\prime \prime}_k(x)|^2\ \mathrm{d}x \leq \tilde{C}
\end{equation*}
for some uniform constant $\tilde{C} > 0$ and all $k$ large enough. This implies that the sequence $(\varphi^{\prime}_k)$ is also a bounded sequence in $H^1((\varepsilon,+1/2))$ and therefore contains a subsequence that converges in norm in $L^2((\varepsilon,+1/2))$. Hence, referring to a sequence $(\varepsilon=:\varepsilon_n:=1/n)$ and employing Cantor's diagonalisation argument, we find a sequence of ground states for which both, $(\varphi_k)$ and $(\varphi^{\prime}_k)$, converge in norm in $L^2(+\varepsilon_n,+1/2)$ for each $n \in \mathbb{N}$. From the definiton of the weak derivative, we hence conclude that $\varphi_{\infty}$ is in $H^1((\varepsilon_n,+1/2))$ for each $n \in \mathbb{N}$. Also, $(\varphi_k)$ converges in norm to $\varphi_{\infty}$ in $H^1((\varepsilon_n,+1/2))$ for each $n \in \mathbb{N}$. In addition, since 
\begin{equation*}
\|\varphi_\infty\|_{H^1((\varepsilon_n,+1/2))} \leq c
\end{equation*}
with a constant $c > 0$ independent of $n \in \mathbb{N}$, we infer that indeed $\varphi_{\infty} \in  H^1((-1/2,0)) \oplus H^1((0,+1/2))$.

We now aim to prove $\varphi_{\infty}(0^-)=0$ (in the same way one obtains $\varphi_{\infty}(0^+)=0$): We have, using Cauchy-Schwarz, 
\begin{equation*}\begin{split}
|\varphi_{\infty}(0^-)-\varphi_k(0)|&\leq \int_{-1/2}^{0}|\varphi^{\prime}_{\infty}(x)-\varphi^{\prime}_k(x)|\ \ud x\ , \\
&\leq \sqrt{\varepsilon_n}\cdot \|\varphi^{\prime}_{\infty}-\varphi^{\prime}_k\|_{L^2(-\varepsilon_n,0)}+\|\varphi^{\prime}_{\infty}-\varphi^{\prime}_k\|_{L^2(-1/2,-\varepsilon_n)}\ , \\
&\leq \varepsilon\ ,
\end{split}
\end{equation*}
for every $\varepsilon > 0$ and $k,n$ large enough. On the other hand, we readily conclude that $\lim_{k \rightarrow \infty} \varphi_k(0)=0$, since otherwise with \eqref{EstimateUniform} we could find a neighbourhood $U$ of $x=0$ on which $|\varphi_k(x)| \geq \alpha > 0$ uniformly in $k$ and hence
\begin{equation}\label{EstX}\begin{split}
\lambda_0(L_k) \geq \int_{U} w_{L_k}(x)|\varphi_k(x)|^2\ \ud x&=L^2_k\int_{U} v(L_k x)|\varphi_k(x)|^2\ \ud x \\
& \geq \alpha^2 L^2_k \int_{U} v(L_k x)\ \ud x \\
& \geq \alpha^2 L_k \int_{(-b,+b)} v(x)\ \ud x \rightarrow \infty
\end{split}
\end{equation}
for some fixed  $b > 0$ and $k$ large enough (here $v \not\equiv 0$ is used). This is a contradiction and therefore $\lim_{k \rightarrow \infty} \varphi_k(0)=0$ which implies $\varphi_{\infty}(0^-)=0$. In total, we conclude that  $\varphi_{\infty} \in H^1_0((-1/2,+1/2))$.

It remains to prove norm convergence of $(\varphi_k)$ to $\varphi_{\infty}$ in $H^1((-1/2,+1/2))$. We start by observing that, for every $\varepsilon > 0$, we can find a $n \in \mathbb{N}$ such that 
\begin{equation*}
\int_{-\varepsilon_n}^{+\varepsilon_n}|\varphi^{\prime}_k(x)|^2\ \mathrm{d}x \leq \varepsilon
\end{equation*} 
holds for all $k$ large enough. This follows from the variational principle (using that $\lambda_0(L_k) < \frac{\pi^2}{(1/2)^2}$ for all $k$ large enough) in combination with $\varphi_{\infty} \in H^1_0((-1/2,0)) \oplus H^1_0((0,+1/2))$ and the convergence of $(\varphi_k)$ to $\varphi_{\infty}$ in $H^1((\varepsilon_n,+1/2))$. From this we conclude that, for every $\varepsilon > 0$ and $k$ large enough,  
\begin{equation*}\begin{split}
\int_{-1/2}^{+1/2}|\varphi^{\prime}_{\infty}(x)|^2\ \mathrm{d}x - \varepsilon\leq \int_{-1/2}^{+1/2}|\varphi^{\prime}_k(x)|^2\ \mathrm{d}x \leq \int_{-1/2}^{+1/2}|\varphi^{\prime}_{\infty}(x)|^2\ \mathrm{d}x +\varepsilon \ ,
\end{split}
\end{equation*}
and this proves convergence in norm, taking weak convergence into account.
\end{proof}
%
%
%\begin{cor}[Lower bound II] Assume that $v \in L^{\infty}(\mathbb{R})$ is not the zero-potential and that 
	%
%	\begin{equation*}%
%	|v(x)| \leq \frac{c}{|x|^{\alpha}}\ , \quad \text{for a.e.}\quad x \in \mathbb{R}\ ,
%	\end{equation*}
	%
%	for some constant $c > 0$ and $\alpha > 2$. Then 
	%
%	\begin{equation*}
%	\lim_{L \rightarrow \infty}L^2\Gamma_v(L)=0\ .
%	\end{equation*}
%\end{cor}
%
%\begin{proof} The proof follows the same lines as the proof of Theorem~\ref{ThmLowerBoundI}. In particular, the assumed decay property of $v$ ensures that similar or the same estimates as in \eqref{EstXX}, \eqref{EstXXX} and \eqref{EstX} can be obtained. 
%\end{proof}
%
Theorem~\ref{ThmLowerBoundI} shows that the spectral gap closes faster than $L^{-2}$ given the potential decays faster than $|x|^{-2}$. This raises the question as to whether this result can be extended to potentials that decay exactly like $|x|^{-2}$. 
The next theorem provides a negative answer.
\begin{theorem}\label{TheoremSP} Consider the potential $v \in L^{\infty}(\mathbb{R})$ defined via
\begin{equation*}
v(x):=\frac{\eins_{x \geq 1}}{x^2} \ ,
\end{equation*}
where $\eins_{x \geq 1}$ denotes the characteristic function of $(1,\infty)$. Then there exist constants $0 < \alpha < \beta$ such that
\begin{equation*}
\frac{\alpha}{L^2} \leq \Gamma_v(L) \leq \frac{\beta}{L^2}
\end{equation*}
for all $L >0$ large enough.
\end{theorem}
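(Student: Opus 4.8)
The plan is to obtain the two bounds by entirely different means. The upper bound is immediate: since $v(x)=\eins_{x\geq 1}/x^2 \leq 1/|x|^2$ for a.e.\ $x\in\mathbb{R}$ and $v$ is bounded and non-negative, Theorem~\ref{UpperBoundI} applies with $C=1$ and gives $\Gamma_v(L)\leq \beta/L^2$ for all $L$ large enough. The content of the statement is therefore the matching lower bound. The decisive feature distinguishing this potential from those in Theorem~\ref{ThmLowerBoundI} is that the rescaled potential does \emph{not} vanish in the limit: by Proposition~\ref{UnitaryTrafo} one has $\Gamma_v(L)=L^{-2}\widetilde{\Gamma}_{w_L}(L)$ with $w_L(x)=\eins_{x\geq 1/L}/x^2$, so it suffices to show $\liminf_{L\to\infty}\widetilde{\Gamma}_{w_L}(L)>0$. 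Since $w_L\equiv 0$ on $(-1/2,0)$ for every $L$, testing $g_L$ against the normalized ground state of the free Dirichlet Laplacian on $(-1/2,0)$ (extended by zero) gives $\lambda_0(L)\leq 4\pi^2=\pi^2/(1/2)^2$ for all $L>0$.

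It therefore remains to prove $\liminf_{L\to\infty}\lambda_1(L)>4\pi^2$, as this together with $\lambda_0(L)\leq 4\pi^2$ yields $\liminf_{L\to\infty}(\lambda_1(L)-\lambda_0(L))>0$. I would argue by contradiction, assuming a sequence $L_k\to\infty$ with $\lambda_1(L_k)\to c\leq 4\pi^2$. Let $(\varphi_k^0,\varphi_k^1)$ be the corresponding orthonormal ground and first excited states of $g_{L_k}$. Reusing the compactness and boundary-vanishing part of the proof of Theorem~\ref{ThmLowerBoundI}, now applied to both sequences, one passes to a subsequence along which $\varphi_k^0,\varphi_k^1$ converge in $L^2((-1/2,+1/2))$, converge weakly in $H^1((-1/2,+1/2))$, and converge in $H^1$ on each $(\varepsilon_n,+1/2)$ and $(-1/2,-\varepsilon_n)$, with limits $\varphi_\infty^0,\varphi_\infty^1$. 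The estimate \eqref{EstX} applies verbatim (its last line still diverges, since $\int_{(-b,+b)}v\,\mathrm{d}x>0$ for $b>1$) and forces $\varphi_k^j(0)\to 0$, hence $\varphi_\infty^j(0)=0$; thus $\varphi_\infty^j\in H^1_0((-1/2,+1/2))$, the two limits remain orthonormal, and in particular each has unit $L^2$-norm.

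The new ingredient is that the potential survives on the right half. Writing $Q_\infty(\varphi):=\int_{-1/2}^{+1/2}|\varphi'|^2\,\mathrm{d}x+\int_0^{1/2}|\varphi|^2/x^2\,\mathrm{d}x$, lower semicontinuity of the kinetic term under weak $H^1$-convergence together with Fatou's lemma applied (along a further a.e.-convergent subsequence) to the non-negative integrands $w_{L_k}|\varphi_k^j|^2\to |\varphi_\infty^j|^2/x^2$ on $(0,1/2)$ gives $Q_\infty(\varphi_\infty^j)\leq \liminf_k\lambda_j(L_k)\leq 4\pi^2$. On the other hand, since $\varphi_\infty^j(0)=0$, the Poincaré inequality with optimal constant on each half-interval yields $\int_{-1/2}^{+1/2}|(\varphi_\infty^j)'|^2\,\mathrm{d}x\geq 4\pi^2\|\varphi_\infty^j\|_{L^2((-1/2,+1/2))}^2=4\pi^2$, so that $Q_\infty(\varphi_\infty^j)\geq 4\pi^2+\int_0^{1/2}|\varphi_\infty^j|^2/x^2\,\mathrm{d}x$. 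Combining the two bounds forces $\int_0^{1/2}|\varphi_\infty^j|^2/x^2\,\mathrm{d}x=0$, i.e.\ $\varphi_\infty^j\equiv 0$ on $(0,1/2)$; the remaining budget then forces equality in Poincaré on $(-1/2,0)$, so $\varphi_\infty^j$ is the (unique up to phase) normalized Dirichlet ground state of the Laplacian on $(-1/2,0)$. As this holds for both $j=0$ and $j=1$, it contradicts $\langle\varphi_\infty^0,\varphi_\infty^1\rangle=0$. Hence $\liminf_k\lambda_1(L_k)>4\pi^2$, establishing the lower bound $\Gamma_v(L)\geq\alpha/L^2$.

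The conceptual core --- that the non-integrable singularity which $w_L$ develops at $0^+$ ``blocks'' the right half-interval, leaving only the one-dimensional ground space of the left half available at energy $4\pi^2$ --- is transparent, so I expect the main obstacle to be the functional-analytic bookkeeping: carefully adapting the compactness and boundary-vanishing argument of Theorem~\ref{ThmLowerBoundI} to the first excited state, and justifying the limit in the singular potential term, where the a.e.\ convergence $\varphi_k^j\to\varphi_\infty^j$ must be reconciled with the pointwise convergence $w_{L_k}\to x^{-2}$ on $(0,1/2)$ so that Fatou's lemma applies.
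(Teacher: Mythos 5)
Your proof is correct, and while it shares the paper's outer skeleton (upper bound from Theorem~\ref{UpperBoundI}, rescaling via Proposition~\ref{UnitaryTrafo}, contradiction along a sequence $L_k$, and the compactness/boundary-vanishing machinery from the proof of Theorem~\ref{ThmLowerBoundI} applied to both eigenfunctions), the way you reach the contradiction is genuinely different. The paper constructs the limiting operator $h_{\infty}=-\ud^2/\ud x^2+\eins_{x\geq 0}/x^2$ as a form sum, proves its ground state is simple via a Rayleigh-quotient restriction argument, and then shows $\lambda_j(L_k)\to\mu_j$ for $j=0,1$ by an operator-bracketing argument, concluding from $\mu_1>\mu_0$. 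You instead decouple the two eigenvalues: the elementary trial-function bound $\lambda_0(L)\leq 4\pi^2$ (valid for every $L$, since $w_L$ vanishes on the left half-interval), and a Poincar\'e-rigidity argument showing that any weak limit of eigenfunctions with energy $\leq 4\pi^2$ must have vanishing potential term, hence be supported in $(-1/2,0)$ and saturate the Poincar\'e inequality there, hence coincide (up to phase) with the left half-interval Dirichlet ground state --- which two orthonormal limits cannot both do. This buys you two things: you never need to define $h_{\infty}$ or discuss its discrete spectrum and ground-state simplicity, and you replace the paper's rather tersely justified ``operator bracketing'' eigenvalue-convergence step with a fully explicit lower-semicontinuity estimate ($Q_{\infty}(\varphi_{\infty}^j)\leq\liminf_k\lambda_j(L_k)$ via weak $H^1$ lower semicontinuity plus Fatou), which only requires an inequality rather than actual convergence of the eigenvalues. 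The price is that your argument is tied to the explicit value $4\pi^2$ and the equality case of the Poincar\'e inequality, whereas the paper's scheme would transfer to other limit potentials whose limiting operator one can analyze abstractly. All the individual steps you flag as needing care (equicontinuity and the divergence in \eqref{EstX} for the first excited state, the a.e.\ subsequence for Fatou, superadditivity of $\liminf$ when splitting kinetic and potential terms) do go through as you describe.
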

\begin{proof} The upper bound directly follows from Theorem~\ref{UpperBoundI}. 
	
	Furthermore, by Prop.~\ref{UnitaryTrafo} it is enough to prove that there exists $\alpha > 0$ with 
		\begin{equation}\label{ToShow}
		\alpha < \widetilde{\Gamma}_{w_L}(L)
		\end{equation}
		for all $L$ large enough. We note that $w_L(x)=L^2v(Lx)=\frac{\eins_{Lx \geq 1}}{x^2}$. Now, if \eqref{ToShow} does not hold, there exists a sequence of lengths $(L_k)$ such that $\widetilde{\Gamma}_{w_{L_k}}(L_k) \rightarrow 0$ as $k \rightarrow \infty$. 
		
		We now assume that such a sequence exists and consider the spectral gap $\widetilde{\Gamma}_{w_{\infty}}$ of the operator
		\begin{equation*}\begin{split}
		h_{\infty}&=-\frac{\ud^2}{\ud x^2}+\frac{\eins_{x \geq 0}}{x^2} \\
		&:=-\frac{\ud^2}{\ud x^2}+w_{\infty}(x)
		\end{split}
		\end{equation*}
		defined on $L^2((-1/2,+1/2))$. We note that this operator can be defined rigorously via the quadratic form 
		\begin{equation*}
		q_{\infty}[\varphi]=\int_{(-1/2,+1/2)}\left(|\varphi^{\prime}|^2+w_{\infty}(x)|\varphi|^2\right)\ \ud x  
		\end{equation*}
		with form domain
		\begin{equation*}
		\cD_{q_{\infty}}:=\{\varphi \in H^1_0((-1/2,+1/2)):\ \|\sqrt{w_{\infty}}\varphi\|_{L^2((-1/2,+1/2))} < \infty \}\ .
		\end{equation*}
		Since $\cD_{q_{\infty}}$ is, with respect to the form norm, compactly embedded in $L^2((-1/2,+1/2))$, $h_{\infty}$ has purely discrete spectrum. Note that, by continuity, each funtion in $\cD_{q_{\infty}}$ satisfies Dirichlet boundary conditions also at $x=0$.
		
		We now want to show that $\widetilde{\Gamma}_{w_{\infty}} > 0$, i.e., that the ground state of $h_{\infty}$ is non-degenerate: Assume that $\psi_0$ is a ground state to $h_\infty$ associated with the eigenvalue $\mu_0$. Then, by the variational principle,
		\begin{equation*}\begin{split}
		\mu_0&=\frac{\|\psi^{\prime}_0\|^2_{L^2((-1/2,+1/2))}+\|\sqrt{w_{\infty}}\psi_0\|^2_{L^2((-1/2,+1/2))}}{\|\psi_0\|^2_{L^2((-1/2,+1/2))}} \\
		&\geq \frac{\|\psi^{\prime}_0\eins_{x \leq 0}\|^2_{L^2((-1/2,+1/2))}}{\|\psi_0\eins_{x \leq 0}\|^2_{L^2((-1/2,+1/2))}}\ ,
		\end{split}
		\end{equation*}
		where we employed the inequality $\frac{a+b}{c+d} \geq \min\{\frac{a}{c},\frac{b}{d}\}$ for $a,b,c,d > 0$. Hence, since the ground state minimizes the Rayleigh quotient, we conclude that the restriction of $\psi_0$ to the interval $(-1/2,0)$ is also minimizes the Rayleigh quotient. Consequently, all ground states of $h_{\infty}$ agree on $(-1/2,0)$ but this leads to a contradiction when assuming the ground state has higher multiplicity, taking orthogonality into account.
		
		Now, we go back to the sequence of lengths $(L_k)$ such that $\widetilde{\Gamma}_{w_{L_k}}(L_k) \rightarrow 0$ as $k \rightarrow \infty$: As in the proof of Theorem~\ref{ThmLowerBoundI}, we conclude that the sequence of normalized ground states $(\varphi^{L_k}_0)$ of $g_{L_k}$ contains a subsequence that converges in $H^1((-1/2,+1/2))$ to a limit function $\varphi_{\infty} \in H^1((-1/2,+1/2))$ with $\varphi_{\infty}(0)=0$. Furthermore, by Fatou's Lemma, we conclude that $\varphi_{\infty} \in \cD_{q_\infty}$. Now, the variational principle in combination with an operator bracketing argument implies that (along a subsequence)
		\begin{equation*}
		\mu_0 \leq \lim_{k \rightarrow \infty}\lambda_0(L_k) \leq \mu_0\ 
		\end{equation*}
and hence $\varphi_{\infty}$ is the ground state of $h_{\infty}$. 

In the same way, a subsequence of normalized eigenstates $(\varphi^{L_k}_1)$ associated with the eigenvalues $\lambda_1(L_k)$ converges in $H^1((-1/2,+1/2))$ to a limit function $\theta_{\infty}$ which is then orthogonal to $\varphi_{\infty}$. Hence, along a subsequence and by the variational principle and an operator-bracketing argument, 
\begin{equation*}
\mu_1 \leq \lim_{k \rightarrow \infty} \lambda_1(L_k) \leq \mu_1
\end{equation*}
	where $\mu_1$ denotes the second eigenvalue of $h_{\infty}$. This proves the statement since $\mu_1 > \mu_0$, contradicting $\widetilde{\Gamma}_{w_{L_k}}(L_k) \rightarrow 0$ along a suitable subsequence.

\end{proof}
\begin{rem} Note that Theorem~\ref{TheoremSP} shows that the upper bound specified in Theorem~\ref{UpperBoundI} for short-range potentials is indeed optimal within this class of potentials. 
\end{rem}
In our final result, we provide an example where the gap closes like $L^{-3}$.
\begin{prop}\label{Example} Assume that $v \in L^{\infty}(\mathbb{R})$ is the step-potential given by
	\begin{equation*}
	v(x):=v_0 \cdot \eins_{-b\leq x \leq +b}
	\end{equation*}
	with some constants $v_0,b > 0$. Then 
	\begin{equation*}
	\frac{\alpha}{L^3} \leq \Gamma_v(L) \leq \frac{\beta}{L^3}
	\end{equation*}
	for some constant $\beta,\alpha > 0$ and all $L > 0$ large enough.
\end{prop}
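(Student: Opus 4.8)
The plan is to exploit the fact that for a piecewise-constant potential the scaled eigenvalue problem is explicitly solvable, and then to read off the gap asymptotics from the resulting transcendental equations. By Proposition~\ref{UnitaryTrafo} it suffices to analyse $g_L=-\ud^2/\ud x^2+w_L$ on $L^2((-1/2,+1/2))$ with $w_L(x)=v_0L^2\,\eins_{|x|\le b/L}$, i.e.\ a barrier of height $v_0L^2$ on the shrinking interval $[-b/L,+b/L]$. Since $w_L$ is even, the ground state is even (it is positive and unique, hence invariant under $x\mapsto -x$), and I will argue that for large $L$ the first excited state is the lowest \emph{odd} eigenfunction. Writing $\lambda_0^{\mathrm{ev}}$ and $\lambda_0^{\mathrm{odd}}$ for the lowest eigenvalue in the even and odd sectors, one then has $\lambda_0(L)=\lambda_0^{\mathrm{ev}}$ and $\lambda_1(L)=\lambda_0^{\mathrm{odd}}$, so $\widetilde\Gamma_{w_L}(L)=\lambda_0^{\mathrm{odd}}-\lambda_0^{\mathrm{ev}}$, and by Proposition~\ref{UnitaryTrafo} it remains to establish two-sided bounds of the form $\alpha/L\le\lambda_0^{\mathrm{odd}}-\lambda_0^{\mathrm{ev}}\le\beta/L$.

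For the explicit solution, set $a:=b/L$ and, for an eigenvalue $E$ near $4\pi^2$, put $k:=\sqrt{E}$ and $\kappa:=\sqrt{v_0L^2-E}$ (real for $L$ large, as $v_0L^2\to\infty$). On $(a,1/2)$ every solution vanishing at $x=1/2$ is a multiple of $\sin(k(1/2-x))$, whereas on $(0,a)$ the even and odd solutions are $\cosh(\kappa x)$ and $\sinh(\kappa x)$. Matching value and derivative at $x=a$ yields the transcendental equations
\begin{gather*}
\kappa\tanh(\kappa a)=-k\cot\big(k(1/2-a)\big)\quad(\text{even}),\\
\kappa\coth(\kappa a)=-k\cot\big(k(1/2-a)\big)\quad(\text{odd}).
\end{gather*}

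The asymptotics are governed by the observations that $\kappa a\to b\sqrt{v_0}$ (a constant) while $\kappa\sim\sqrt{v_0}\,L\to\infty$, so the left-hand sides behave like $\sqrt{v_0}\,L\tanh(\sqrt{v_0}\,b)$ and $\sqrt{v_0}\,L\coth(\sqrt{v_0}\,b)$, both tending to $+\infty$. Forcing the right-hand side to be large positive pins $k(1/2-a)$ just below $\pi$; writing $k(1/2-a)=\pi+\delta$ with $\delta<0$ small and using $\cot(\pi+\delta)\sim 1/\delta$ gives $\delta_{\mathrm{ev}}\sim-2\pi/(\sqrt{v_0}\,L\tanh(\sqrt{v_0}\,b))$ and $\delta_{\mathrm{odd}}\sim-2\pi/(\sqrt{v_0}\,L\coth(\sqrt{v_0}\,b))$. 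The common $O(1/L)$ shift stemming from the factor $(1/2-a)^{-1}$ cancels in the difference, and with $\coth u-\tanh u=2/\sinh(2u)$ one obtains
\begin{equation*}
k_{\mathrm{odd}}-k_{\mathrm{ev}}=\frac{8\pi}{\sqrt{v_0}\,\sinh(2\sqrt{v_0}\,b)}\cdot\frac{1}{L}+o(L^{-1}).
\end{equation*}
Since $\lambda_0^{\mathrm{odd}}-\lambda_0^{\mathrm{ev}}=k_{\mathrm{odd}}^2-k_{\mathrm{ev}}^2=(k_{\mathrm{odd}}+k_{\mathrm{ev}})(k_{\mathrm{odd}}-k_{\mathrm{ev}})$ with $k_{\mathrm{odd}}+k_{\mathrm{ev}}\to4\pi$, this quantity is comparable to $L^{-1}$; multiplying by $L^{-2}$ via Proposition~\ref{UnitaryTrafo} yields $\Gamma_v(L)\asymp L^{-3}$.

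The main work lies in turning the heuristic solution of the transcendental equations into rigorous two-sided estimates: one must quantify the error terms so that the leading $c/L$ behaviour produces genuine bounds $\alpha/L\le\lambda_0^{\mathrm{odd}}-\lambda_0^{\mathrm{ev}}\le\beta/L$ valid for all large $L$, rather than a mere limiting statement. A secondary point, needed to justify $\lambda_1(L)=\lambda_0^{\mathrm{odd}}$, is to verify that no further eigenvalue intrudes between $\lambda_0$ and $\lambda_1$: the same equations show that the next solutions sit near $k(1/2-a)=2\pi$, i.e.\ $E\approx16\pi^2$, so $\lambda_2(L)$ remains bounded away from the pair near $4\pi^2$. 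I expect the quantitative control of the remainders in the expansion of $\delta_{\mathrm{ev}}$ and $\delta_{\mathrm{odd}}$ to be the chief technical obstacle, though it is routine once the structure above is in place.
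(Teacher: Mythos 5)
Your proposal is correct and follows essentially the same route as the paper: rescale via Proposition~\ref{UnitaryTrafo}, split into even and odd sectors using the symmetry of the barrier, solve explicitly with $\sin$/$\sinh$/$\cosh$ pieces, and extract the $O(1/L)$ splitting from the matching conditions — your transcendental equations are exactly the paper's $\tan(\omega_j l_1)=-\tfrac{\omega_j}{M_j}\left(\tanh(M_j l_2)\right)^{\pm 1}$ in different notation. The only difference is in the final step you defer as "routine": the paper gets the rigorous two-sided bound by writing $\omega_1-\omega_0 = l_1^{-1}\int_{X_1}^{X_0}(1+t^2)^{-1}\,\ud t$ and bounding the integrand, rather than linearizing the cotangent and controlling remainders.
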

\begin{proof} Again we employ the unitary transformation from Proposition~\ref{UnitaryTrafo} and set $w_L(x):=v_0L^2\eins_{-\frac{b}{L}\leq x \leq +\frac{b}{L}}(x)$. Accordingly, we have to show that
	\begin{equation*}
	\frac{\alpha}{L} \leq \widetilde{\Gamma}_{w_L}(L) \leq \frac{\beta}{L}
	\end{equation*}
	for some constants $\beta,\alpha > 0$ and all $L > 0$ large enough. In a first step we employ symmetry of the potential to obtain $\left(\tilde{\varphi}^L_0\right)^{\prime}(0)=0$ for the ground state of $g_L$ and $\tilde{\varphi}^L_1(0)=0$ for the second eigenfunction. Furthermore, for $L > 0$ large enough, we can identifiy the interval $(-1/2,-b/L)$ with $I_1:=(0,l_1)$ and where $l_1:=1/2-b/L$; in the same way, we identify $(-b/L,0)$ with $I_2:=(0,l_2)$ where $l_2:=b/L$. 
	
	Hence, for each eigenfunction $\tilde{\varphi}^L_j$ (when considered on the intervals $I_1,I_2$), $j=0,1$, and $L > 0$ large enough we can make the ansatz
	\begin{equation*}
	\tilde{\varphi}^L_j(x):=\sin(\omega_j x)\ , \quad x \in I_1\ ,
	\end{equation*}
	and 
		\begin{equation*}
	\tilde{\varphi}^L_j(x):=a_j\sinh(M_j x)+b_j\cosh(M_j x)\ , \quad x \in I_2\ ,
	\end{equation*}
	setting $\omega_j=\sqrt{\lambda_j(L)}$ and $M_j:=\sqrt{v_0L^2-\omega^2_j}$. Now, since eigenfunctions of $g_L$ are continuously differentiable, we obtain the conditions
	\begin{equation*}
	a_j:=\frac{\omega_j}{M_j}\cos(\omega_j l_1)\ , \quad b_j=\sin(\omega_j l_1)\ .
	\end{equation*}
	Furthermore, by the conditions mentioned further above, 
	\begin{equation*}
	\omega_0 \cos(\omega_0 l_1)\cosh(M_0 l_2)+ M_0\sin(\omega_0 l_1) \sinh(\omega_0 l_2)=0\ ,
	\end{equation*}
	and 
		\begin{equation*}
	\omega_1 \cos(\omega_1 l_1)\cosh(M_1 l_2)+ M_1\sin(\omega_1 l_1) \sinh(\omega_1 l_2)=0\ .
	\end{equation*}
	Consequently, we obtain 
	\begin{equation*}
	\tan(\omega_0 l_1)=-\frac{\omega_0}{M_0}\left(\tanh(M_0 l_2)\right)^{-1}\ ,
	\end{equation*}
	as well as
	\begin{equation*}
	\tan(\omega_1 l_1)=-\frac{\omega_1}{M_1}\left(\tanh(M_1 l_2)\right)\ .
	\end{equation*}
	This yields 
	\begin{equation*} \begin{split}
	\omega_1-\omega_0&=l^{-1}_1\left[\arctan\left(\frac{\omega_0}{M_0}\left(\tanh(M_0 l_2)\right)^{-1}\right)-\arctan\left(\frac{\omega_1}{M_1}\left(\tanh(M_1 l_2)\right)\right)  \right] \\
	&=l^{-1}_1\int_{X_1}^{X_0}\frac{1}{1+t^2}\ud t
	\end{split}
	\end{equation*}
	with $X_0:=\frac{\omega_0}{M_0}\left(\tanh(M_0 l_2)\right)^{-1}$ and $X_1:=\frac{\omega_1}{M_1}\tanh(M_1 l_2)$. This yields
	\begin{equation*}
\frac{l^{-1}_1(X_0-X_1)}{1+X_0^2}\leq	\omega_1-\omega_0 \leq \frac{l^{-1}_1(X_0-X_1)}{1+X_1^2}\ .
	\end{equation*}
	On the other hand, since $M_jl_2 \rightarrow 1$ as $L \rightarrow \infty$ we obtain, for $L > 0$ large enough,
\begin{equation*}
\frac{c_1}{L}\leq	\omega_1-\omega_0 \leq \frac{c_2}{L}\ .
\end{equation*}
for some constants $c_1,c_2 > 0$ independent of $L$. Consequently, for $L > 0$ large enough,
\begin{equation}\label{EQXXXX}
\frac{\alpha}{L}\leq \lambda_1(L)-\lambda_0(L)=(\omega_1-\omega_0)(\omega_1+\omega_0) \leq \frac{\beta}{L}
\end{equation}
for some constants $\alpha,\beta > 0$ independent of $L > 0$. This proves the statement. 
	\end{proof}

\begin{rem} In Proposition~\ref{Example} we derived upper and lower bounds on the spectral gap for the symmetric step potential that asymptotically behave like $L^{-3}$. We suspect that the same order should hold for any bounded, non-negative and non-zero potential of compact support; possibly, non-negativity can even be relaxed to strictly positive average. Indeed, after the unitary transformation of Prop.~\ref{UnitaryTrafo}, the potential is $w_L(x):=L^2v(Lx)=L \cdot (Lv(Lx))$ and one has
 \begin{equation*}
 Lv(Lx) \rightarrow \left( \int_{\mathbb{R}} v(y) \mathrm{d} y\right) \delta(x)\ , \quad L \rightarrow \infty\ ,
 \end{equation*}
 in the sense of distributions. 
 But a straightforward calculation shows that the spectral gap of the Dirichlet Laplacian on $(-1/2,+1/2)$ with a $\delta$-interaction of strength $L$ at $x = 0$ behaves approximately as $L^{-1}$ as $L$ tends to infinity, suggesting a similar asymptotics as in Proposition~\ref{Example}.	%

\end{rem}

\appendix

\section{A shortcut in the proof of Theorem~\ref{ThmLowerBoundI}}

\begin{lemma}
	\label{lem:appendix}
	 Assume that $v \in L^{\infty}(\mathbb{R})$ is non-negative and not the zero potential. In addition, we assume it decays such that
	\begin{equation*}
	|v(x)| \leq \frac{C}{|x|^{\alpha}}\ , \quad \text{for a.e.}\quad x\in \mathbb{R}\ ,
	\end{equation*}
	for some constants $C > 0$ and $\alpha > 2$. Then, for the scaled potentials $w_L := L^2 v(L \cdot) \in L^\infty((- \frac{1}{2}, +\frac{1}{2} ))$ and the ground state eigenvalue $\lambda_0(L)$ of the operator 
	\[
	g_L = -\frac{\mathrm{d}^2}{\mathrm{d} x^2} + w_L(x)
	\quad
	\text{in $L^2((- 1/2, +1/2 ))$}
	\]
	with Dirichlet boundary conditions, one has
	\begin{equation}
	\label{eq:liminf}
		\liminf_{L \to \infty}
		\lambda_0(L)
		\geq
		\frac{\pi^2}{(1/2)^2}.
	\end{equation}
\end{lemma}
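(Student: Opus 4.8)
The plan is to argue by contradiction, exploiting that the rescaled potential $w_L$ concentrates an ever-increasing amount of mass near the origin, which forces any subsequential limit of ground states to vanish there; once this is known, the sharp Poincaré inequality on each of the two half-intervals closes the argument. The key simplification compared with the argument in the body of the paper is that I only need \emph{weak} $H^1$-convergence together with weak lower semicontinuity of the Dirichlet energy, so none of the strong $H^1$-convergence machinery (Cantor diagonalisation, convergence of derivatives away from $0$, norm convergence) is required.

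Concretely, suppose \eqref{eq:liminf} fails. Then there are a sequence $L_k \to \infty$ and a constant $c < \frac{\pi^2}{(1/2)^2}$ with $\lambda_0(L_k) \to c$. Let $\varphi_k \in H^1_0((-1/2,+1/2))$ be the associated normalized ground states. Since $w_L \geq 0$, one has $\|\varphi'_k\|^2_{L^2((-1/2,+1/2))} \leq \lambda_0(L_k)$, so $(\varphi_k)$ is bounded in $H^1_0((-1/2,+1/2))$. Passing to a subsequence, $\varphi_k \rightharpoonup \varphi_\infty$ weakly in $H^1$, hence, by the compact embedding $H^1((-1/2,+1/2)) \hookrightarrow C^0([-1/2,+1/2])$ in one dimension, uniformly; in particular $\|\varphi_\infty\|_{L^2((-1/2,+1/2))}=1$ and $\varphi_\infty$ is continuous with $\varphi_\infty(\pm 1/2)=0$.

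The main step, and the only place where non-negativity and $v \not\equiv 0$ enter, is to show $\varphi_\infty(0)=0$. Here I would use that $\int w_{L_k}|\varphi_k|^2 \leq \lambda_0(L_k)$ stays bounded and rewrite, with $y=L_k x$,
\[
\int_{-1/2}^{+1/2} w_{L_k}(x)\,|\varphi_k(x)|^2\,\ud x = L_k \int_{-L_k/2}^{+L_k/2} v(y)\,|\varphi_k(y/L_k)|^2\,\ud y .
\]
Since $v \geq 0$ and $v \not\equiv 0$, I may fix $R>0$ with $\kappa:=\int_{-R}^{+R} v(y)\,\ud y >0$. For $|y|\leq R$ we have $y/L_k \to 0$ uniformly, so by uniform convergence of $\varphi_k$ and continuity of $\varphi_\infty$ one gets $|\varphi_k(y/L_k)|^2 \to |\varphi_\infty(0)|^2$ uniformly on $[-R,+R]$. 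Hence the right-hand side is bounded below by $L_k\bigl(|\varphi_\infty(0)|^2\,\kappa + o(1)\bigr)$, which diverges unless $\varphi_\infty(0)=0$. This is exactly the concentration mechanism of \eqref{EstX}, isolated here as the crux of the shortcut.

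Finally, $\varphi_\infty \in H^1_0((-1/2,+1/2))$ together with $\varphi_\infty(0)=0$ means that $\varphi_\infty$ restricts to an $H^1_0$-function on each of the two half-intervals of length $1/2$. The sharp Poincaré inequality on each half then gives $\|\varphi'_\infty\|^2_{L^2((-1/2,+1/2))} \geq \frac{\pi^2}{(1/2)^2}\,\|\varphi_\infty\|^2_{L^2((-1/2,+1/2))} = \frac{\pi^2}{(1/2)^2}$. Combining this with weak lower semicontinuity of the Dirichlet energy, namely $\|\varphi'_\infty\|^2_{L^2((-1/2,+1/2))} \leq \liminf_k \|\varphi'_k\|^2_{L^2((-1/2,+1/2))} \leq \liminf_k \lambda_0(L_k) = c$, yields $c \geq \frac{\pi^2}{(1/2)^2}$, contradicting the choice of $c$ and proving \eqref{eq:liminf}.
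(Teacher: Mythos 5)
Your proof is correct, but it takes a genuinely different route from the paper's own proof of Lemma~\ref{lem:appendix}. The appendix argument is quantitative and avoids extracting any limit function: for each $\epsilon>0$ it perturbs the ground state by a small $\eta$ so that $\phi-\eta$ acquires a zero in $(-\epsilon,\epsilon)$, applies the sharp Poincar\'e inequality on the two resulting subintervals of length at most $1/2+\epsilon$, obtains the lower bound $\pi^2/(1/2+\epsilon)^2-\tilde C\epsilon$, and lets $\epsilon\to 0$; the hypothesis $v\not\equiv 0$ enters only to verify that the ground states are $\epsilon$-small somewhere near the origin. Your argument is instead a compactness argument in the spirit of the longer proof of \eqref{ToPriveI} in the body of Theorem~\ref{ThmLowerBoundI}, but with a real simplification: you correctly observe that the inequality needed there, namely $\liminf_k\lVert\varphi_k'\rVert^2\geq\lVert\varphi_\infty'\rVert^2$, is exactly the direction supplied by weak lower semicontinuity of the $L^2$-norm of the derivative, so the strong $H^1$-convergence machinery (uniform $H^{2}$ bounds away from the origin, Cantor diagonalisation) is unnecessary for this lemma --- it is only needed later, in Theorem~\ref{TheoremSP}, where the limit must be identified as an eigenfunction. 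All the individual steps check out: the compact embedding $H^1\hookrightarrow C^0$ upgrades weak convergence to uniform convergence, the change of variables forces $\varphi_\infty(0)=0$ via divergence of $L_k\int_{-R}^{R}v$, and $\varphi_\infty(0)=0$ together with $\varphi_\infty\in H^1_0$ legitimises the sharp Poincar\'e constant $\pi^2/(1/2)^2$ on each half. What the paper's version buys in exchange is an effective, non-compactness estimate with an explicit error term in $\epsilon$; what yours buys is a shorter and conceptually cleaner limit statement.
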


\begin{proof}
It is easy to see that the ground state eigenvalue $\lambda_0(L)$ remains bounded as $L \to \infty$.
Let us prove that if a function $\phi \in H^1_0((-1/2,+1/2))$ with $\lVert \phi \rVert_{L^2((-1/2,+1/2))} = 1$ satisfies 
\begin{equation}
	\label{eq:condition_epsilon}
	\min_{\lvert x \rvert \leq \epsilon} \lvert \phi(x) \rvert \leq \epsilon
\end{equation}
for sufficiently small $\epsilon > 0$, then 
\begin{equation}
	\label{eq:lower_bound_appendix}
	\lVert \phi' \rVert_{L^2((-1/2, +1/2))}^2
	\geq
	\frac{\pi^2}{(1/2 + \epsilon)^2}
	\lVert \phi \rVert_{L^2((-1/2, +1/2))}^2
	-
	\tilde C
	\epsilon
\end{equation}
for an $\phi$-independent $\tilde C > 0$.
Indeed, given such a $\phi$, we can add a function $\eta \in H^1_0((-1/2,+1/2))$ with $\lVert \eta \rVert_{H^1((-1/2,+1/2))} \leq C \epsilon$ such that $\phi - \eta$ has a zero $x_0 \in (- \epsilon, \epsilon)$.
The difference $\phi - \eta$ can be considered as a function in $H^1_0((-1/2,x_0)) \oplus H^1_0((x_0, 1/2))$ and, using $\max \{ 1/2 \pm x_0 \} \leq 1/2 + \epsilon$, the Poincar\'e inequality with optimal constant, applied on each subinterval separately, yields
\[
	\lVert \phi' - \eta' \rVert_{L^2((-1/2,+1/2))}^2
	\geq
	\frac{\pi^2}{(1/2 + \epsilon)^2}
	\lVert \phi - \eta \rVert_{L^2((-1/2,+1/2))}^2\ .
\]
This implies, for sufficiently small $\epsilon > 0$,
\begin{align*}
	\lVert \phi' \rVert_{L^2((-1/2,+1/2))}
	&\geq
	\lVert \phi' - \eta' \rVert_{L^2((-1/2,+1/2))} - \lVert \eta' \rVert_{L^2((-1/2,+1/2))}
	\\
	&\geq
	\frac{\pi}{1/2 + \epsilon}
	\lVert \phi  \rVert_{L^2((-1/2,+1/2))}
	-
	C \epsilon
	\geq 0
\end{align*}
which leads to~\eqref{eq:lower_bound_appendix}. By letting $\epsilon \to 0$, the estimate \ref{eq:liminf} follows.

It therefore remains to check~\eqref{eq:condition_epsilon} for a sequence of ground states $(\varphi_k)$ of the operator $g_{L_k}$ with associated lengths $(L_k)_{k \in \mathbb{N}}$, $L_k \to \infty$.
We claim that for every $\epsilon > 0$, there exists a subsequence $(L_{k_j})_{j \in \mathbb{N}}$ such that $\varphi_{k_j}$ satisfies~\eqref{eq:condition_epsilon}.
Indeed, if this was not the case, then for all $k$ large enough, we would have $\min_{\lvert x \rvert \leq \epsilon} \lvert \varphi_{k}(x) \rvert \geq \epsilon$ and consequently
\[
	\lambda_0(L_{k})
	\geq
	\int_{- \epsilon}^\epsilon
	w_{L_{k}}(x) \lvert \varphi_{k}(x) \rvert ^2\
	\mathrm{d} x
	\geq
	\epsilon^2
	L_{k}^2 
	\int_{- \epsilon}^\epsilon
	v(L_{k} x)\
	\mathrm{d} x
	\to
	\infty \ ,
\]
a contradiction to the boundedness of $\lambda_0(L_{k})$.
\end{proof}

\vspace*{0.5cm}

\subsection*{Acknowledgement}{}JK would like to thank K.~Pankrashkin for stimulating discussions surrounding the topic of this paper. We also thank M.~Plümer and A.~Seelmann for reading the manuscript and useful remarks.

\vspace*{0.5cm}

{\small
\bibliographystyle{amsalpha}
\bibliography{Literature}}

\end{document}